\newtheorem{theorem}{Theorem}
\newtheorem{lemma}[theorem]{Lemma}
\theoremstyle{definition}
\newtheorem{definition}[theorem]{Definition}
\renewcommand{\epsilon}{\varepsilon}
\newcounter{thmlistcnt}
	{\setcounter{thmlistcnt}{0}%
	\begin{list}{\emph{(\roman{thmlistcnt})}}{%
		\usecounter{thmlistcnt}%
		\setlength{\topsep}{0pt}%
		\setlength{\leftmargin}{0pt}%
		\setlength{\itemsep}{0pt}%
		\setlength{\itemindent}{17pt}}%
	}%
	{\end{list}}%
\newcounter{defnlistcnt}
\newenvironment{defnlist}%
	{\setcounter{defnlistcnt}{0}%
	\begin{list}{(\arabic{defnlistcnt})}{%
		\usecounter{defnlistcnt}%
		\setlength{\labelwidth}{18pt}%
		\setlength{\topsep}{-1pt}%
		\setlength{\leftmargin}{36pt}%
		\setlength{\itemsep}{1pt}%
		\setlength{\itemindent}{0pt}}%
	}%
	{\end{list}}%
\begin{document}

\begin{abstract}
This paper solves a pursuit--evasion problem in which a prince must find a princess who is constrained
to move on each day from one vertex of a finite graph to another. Unlike the related
and much studied `Cops and Robbers Game', the prince has no knowledge of the position of the princess;
he may, however, visit any single room he wishes on each day. We characterize the graphs for
which the prince has a winning strategy, and determine, for each such graph, the minimum number
of days the prince requires to guarantee to find the princess.
\end{abstract}

\subjclass[2010]{Primary: 05C57; Secondary: 91A24, 91A43}

\title[Finding a princess in a palace]{Finding a princess in a palace:\\ A pursuit--evasion problem}

\author{John R. Britnell}
\address{Heilbronn Institute for Mathematical Research,
Department of Mathematics, University of Bristol, University Walk, Bristol BS8 1TW, United Kingdom}
\email{j.r.britnell@bristol.ac.uk}

\author{Mark Wildon}
\address{Department of Mathematics, Royal Holloway, University of London, Egham, Surrey TW20 0EX, United Kingdom}
\email{mark.wildon@rhul.ac.uk}

\date{2 April 2012}

\maketitle
\thispagestyle{empty}

\section{Introduction}

A princess
has set a visiting prince
the following challenge.
She will spend each day
in one of the rooms of her palace, and each night she will move into an adjacent room.
The prince may, at noon each day, demand admittance to one
room. If, within a finite
specified
number of days, he finds the princess,
she will agree to marry him. Otherwise, he must leave disappointed.
In this note we characterize the palaces in which the prince has
a winning strategy. We also determine, for each such palace, the minimum
number of days the prince requires to guarantee to find the princess.

The particular case of a palace consisting of $17$ rooms in
a row, and a prince allowed $30$ days to search for the princess,
was posed by Christian Blatter as a problem on
MathOverflow~\cite{BlatterPrincess}. A large number of related searching
problems have been studied in the literature. The closest to our
problem is
the game considered by Parsons in \cite{Parsons}, in which
a team of searchers constrained to move between adjacent vertices
in a graph (representing a cave network) must locate a similarly constrained lost
caver. Besides the constraint on the searchers, this problem also differs in an
important respect
from ours in that capture occurs on the edges, rather than the vertices, of the graph.

Two more related searching games, also with constrained searchers, are the `Hunter-Rabbit Game'
considered in \cite{IslerEtAl}, in which each side has intermittent knowledge
of the other's location, and the much-studied `Cops and Robbers Game' introduced
in \cite{NowakowskiWinkler}, in which both sides have perfect knowledge; see
\cite{AlonMehrabian} for some recent work in this area. We refer the
reader to \cite{FominThilikos} for a comprehensive bibliography of
searching problems.

Throughout the paper we shall assume an adversarial model, in which the
princess knows in advance which sequence of rooms the prince
will visit, and avoids him if she can. Since we are concerned only with the existence and length of guaranteed winning strategies for the prince, this model
is clearly legitimate.

We shall identify a palace with a finite connected simple graph, whose
vertices correspond to rooms, and two vertices are joined
by an edge if they correspond to adjacent rooms.
Let $T$ denote the tree with $10$ vertices
shown in Figure 1 above. (This tree appears as an example of a graph
in which the rabbit may avoid the hunter  in \cite[page 27]{IslerEtAl}.)

\smallskip
\begin{figure}
\begin{center}
\includegraphics{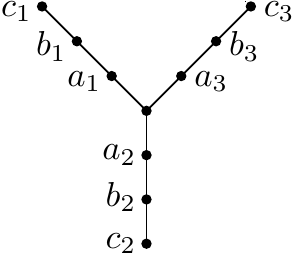}
\end{center}
\caption{The minimal palace $T$ in which the princess can avoid the prince.}
\end{figure}

\begin{theorem}
Let $G$ be a graph representing a palace.
If $G$ is a tree
not containing any subgraph isomorphic to $T$,
then the prince can guarantee to find the princess.
In any other case,
the princess can avoid the prince indefinitely.
\end{theorem}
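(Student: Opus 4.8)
The plan is to pass to the standard ``possible-position'' reformulation and then treat the two directions separately.

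\textbf{Reformulation.} Against an adversarial princess the prince may as well maintain the set $S_i\subseteq V(G)$ of vertices she could occupy on day $i$: set $S_0=V(G)$, and if he checks $v_i$ on day $i$ then $S_{i+1}=N(S_i\setminus\{v_i\})$, where $N(X)=\bigcup_{x\in X}N(x)$. The prince wins exactly when he can force $|S_i|\le 1$ for some $i$, and the princess survives forever exactly when $|S_i|\ge 2$ for every $i$ and every sequence of checks. Two monotonicity facts carry most of the bookkeeping: (a) $X\mapsto N(X\setminus\{v\})$ is monotone in $X$ and $N(X\setminus\{v\})\subseteq N(X)$, so ``passing'' is never better for the prince than a genuine check; (b) hence if $H$ is a subgraph of $G$ on which the princess survives, she survives on $G$ by staying inside $V(H)$ --- an induction (treating checks outside $V(H)$ as passes in $H$, which by (a) do not help the prince there) shows the $G$-set always contains the corresponding $H$-set. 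By (b), the ``princess survives'' half reduces to the two cases $H=$ a cycle and $H=T$.

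\textbf{Princess survives: cycles and $T$.} On a cycle $C_k$ with $k\ge 3$, whatever the first check $v_0$ is, $S_0\setminus\{v_0\}$ is a path on $k-1\ge 2$ vertices whose neighbourhood in $C_k$ is all of $C_k$; so $S_1=V(C_k)$ and, inductively, $S_i=V(C_k)$ for all $i$. Thus the princess survives on every graph with a cycle, in particular on every non-tree. For $H=T$ I would exhibit an explicit survival strategy: $T$ has three ``deep legs'' out of its central vertex, and to delete the tip of one leg from $S_i$ the prince must first delete, and keep deleted, the vertices below it, which forces work all along that leg; with one check per day he cannot keep two legs suppressed simultaneously, so $S_i$ always meets two legs and $|S_i|\ge 2$. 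Precisely, I would track that $S_i$ always contains the tips of at least two legs, by a short case analysis on which of the ten vertices of $T$ is checked. With (b) this handles every tree containing a subgraph isomorphic to $T$, completing the ``other case'' of the theorem.

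\textbf{Prince wins on $T$-free trees.} The key lemma is a structure theorem: a finite tree $G$ has no subgraph isomorphic to $T$ if and only if the tree $G''$ obtained by deleting the leaves of $G$ and then the leaves of the result is a path (allowing the empty graph, a single vertex, or a single edge); equivalently, $G$ is a ``spine'' path $W=w_1\cdots w_m$ together with stars, each joined to $W$ by a single edge at one of the star's vertices. For one direction, a copy of $T$ forces its centre to survive the double leaf-deletion with degree $\ge 3$; conversely, a degree-$\ge 3$ survivor $v$ with neighbours $u_1,u_2,u_3$ in $G''$ yields, on unwinding why the $u_j$ survived twice, three internally disjoint paths of length $3$ from $v$, i.e.\ a copy of $T$. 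Given this, the prince sweeps the spine: a left-to-right pass followed by a right-to-left pass, pausing at each $w_j$ to check the vertices of the bushes hanging at $w_j$. This generalizes the optimal sweep of a bare path $w_1\cdots w_m$, namely $w_2,w_3,\dots,w_{m-1}$ then $w_{m-1},\dots,w_2$, which clears it in $2m-4$ days. The correctness proof is a monotone invariant: once $w_j$ has been processed in the forward pass, $S_i$ is confined to the part of $G$ beyond $w_j$ together with a controlled parity fringe, and the backward pass then collapses $S_i$ to a single vertex; reading off the length also gives the day count announced in the abstract (with short paths and stars treated separately).

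\textbf{Main obstacle.} The whole difficulty is in that last step: the overnight regrowth $X\mapsto N(X)$ works against the sweep, so everything hinges on ordering the checks at each bush and at the turnaround so that the invariant ``the princess is trapped in the not-yet-swept region'' is genuinely preserved, and on extracting the exact number of days (the bare-path value $2m-4$ must be corrected for the bushes and has small exceptions). The survival argument on $T$, though a finite verification, likewise needs the right invariant rather than an exhaustive search.
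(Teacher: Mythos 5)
Your outline matches the paper's in broad shape (cycle case, explicit analysis on $T$, a two-pass parity sweep along a path for $T$-free trees), but both hard steps are left open, and the one concrete invariant you do propose is false. For the $T$ case, label the vertices of $T$ as centre $x$ and legs $x\hbox{--}a_i\hbox{--}b_i\hbox{--}c_i$ ($i=1,2,3$), with tips $c_i$. Starting from $S_0=V(T)$ and using your update rule $S_{i+1}=N(S_i\setminus\{v_i\})$, the check sequence $b_1,a_1,b_2$ gives $S_1=V\setminus\{c_1\}$, $S_2=V\setminus\{b_1\}$, $S_3=V\setminus\{c_1,c_2\}$: after three days the possible-position set contains only the single tip $c_3$ (the same phenomenon occurs in the parity-refined game). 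So the invariant ``$S_i$ always contains the tips of at least two legs'' is not maintained, and your ``short case analysis'' cannot go through as stated. The princess does survive on $T$, but one needs a sharper idea: the paper gives an explicit evasion strategy whose key rule, when she leaves the centre, is to look ahead and avoid both the branch the prince visits next \emph{and} the branch in which he will next visit $a_j$ then $b_j$ on consecutive days; it is precisely this lookahead that defeats the prince's ability to suppress a leg while attacking another, and nothing in your sketch substitutes for it.

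For the positive direction your structural reduction (double leaf-deletion yields a path iff $G$ is $T$-free; degree-$3$ survivors unwind to a copy of $T$) is sound, and the two-pass sweep is indeed the paper's strategy. But, as you yourself say under ``Main obstacle,'' the correctness invariant against overnight regrowth is exactly what is not supplied, so this half is also unproven as written. For comparison, the paper works with a maximal path $P$ (every non-leaf vertex is then on $P$ or adjacent to it, using maximality plus $T$-freeness), has the prince perform a \emph{linear search} visiting only non-leaf vertices --- he never enters the bushes' leaves at all --- and proves by induction the precise invariant: if the princess has the prince's parity, then when he leaves vertex $j$ of $P$ for the last time without catching her, she occupies a vertex of the same colour as $j$ whose nearest vertex of $P$ is numbered at least $j+1$; the reversed search then handles the opposite parity. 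Your ``controlled parity fringe'' gestures at this, but until you fix the order of checks at each attachment vertex and prove that colour-plus-location invariant, the argument has a genuine gap in both directions.
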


\begin{proof}
It is clear that if
 $G$ contains a cycle then the princess
can avoid the prince while staying at the vertices on this cycle, since from any vertex
in the cycle there are two vertices to which she can proceed,
of which the prince can visit only one.

Suppose that $G$ contains $T$ as a subgraph. This part of the palace consists of a central vertex and three branches, which we number $1$, $2$ and $3$, each
with three vertices labelled as above.
We shall show that the princess can avoid the prince by following a strategy in
which on each even day she is either at the central vertex, or one of vertices $b_1$, $b_2$ or $b_3$.
We may assume that the prince visits one of these vertices on each even day.
The princess determines
her moves according to the following principles. If
on day $d$
the princess is at

\begin{itemize}
\item[(1)] vertex $c_i$, then she (necessarily) goes to $b_i$ on day $d+1$;

\item[(2)] vertex $b_i$, then she goes to $a_i$ on day $d+1$,
unless the prince is due
to visit~$a_i$ on day $d+1$,
in which case she goes to $c_i$;

\item[(3)] vertex $a_i$, then she goes to the central vertex on day $d+1$,
unless the
prince is due to visit the central vertex on day $d+1$,
in which case she goes to~$b_i$;

\item[(4)] the central vertex, then she goes to $a_i$, where branch $i$
is neither the branch the prince is due to visit on day $d+1$,
nor the branch $j$
in which he will next visit vertices $a_j$ and $b_j$ on two successive days, if such
a branch exists.
\end{itemize}

To see that this strategy allows the princess to avoid the prince indefinitely, we need only check that she never finds herself at a vertex $c_i$ when the prince is due to visit
$b_i$ on the next day. Since her visits to $c_i$ always coincide with the prince visiting $a_i$, this necessitates his visiting $a_i$ and $b_i$ on successive days. But that implies that he must have visited $a_j$ and $b_j$ in some other branch $j$ on successive days, in the time since the princess last visited the central vertex. And that is impossible, since her rules of movement would then have taken the princess to $a_i$ on the day the prince visited $a_j$, and back to the central vertex on the day he visited $b_j$.

We have established that if $G$ has a subgraph isomorphic to $T$ then the prince has no winning strategy. Conversely,
if $G$ is a tree
with no subgraph isomorphic to $T$ then the prince has a remarkably simple
strategy that will guarantee to find the princess.
Let~$P$ be a path of maximal length $\ell$ in $G$
and let
the vertices in~$P$ be numbered from~$0$ up to $\ell$ in the order they appear in $P$.
It will be useful to take a black-and-white
colouring of the vertices of $G$ that induces a bipartition of~$G$; for definiteness we shall
suppose that vertex $0$ is white.

Any non-leaf vertex in $G$ is either in $P$ or adjacent to a vertex in $P$.
Therefore the prince can begin at vertex $1$ on day $1$ and take a
walk to vertex $\ell-1$ that visits all non-leaf vertices \emph{en route},
never leaves $P$ for two consecutive days, never visits a vertex in $G\setminus P$ more than once, and never visits a lower-numbered
vertex in $P$ after a higher-numbered one. We call such a walk a \emph{linear search}.
For example, in the graph shown in Figure~2 below,
the prince might visit vertices in the order
$1$, $2$, $2a$, $2$, $3$, $3a$, $3$, $4$, $5$, $5a$, $5$, $5a'$, $5$, $6$.

\begin{figure}[h]
\begin{center}
\includegraphics{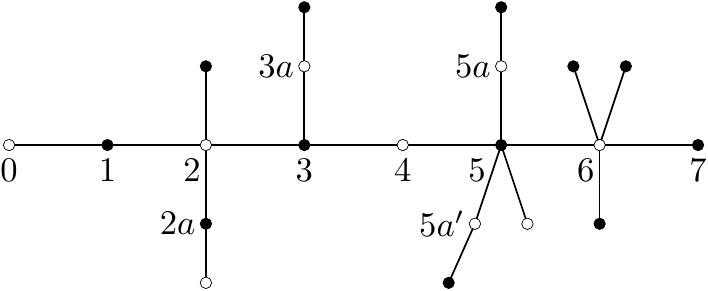}
\end{center}
\caption{A solvable palace with maximal path of length $7$.}
\end{figure}

We claim that if on day $1$ the princess is at a black vertex, then
the prince will find her in the course of his linear search. This is easily
proved by induction, observing that if the prince has not found the princess
when he visits vertex~$j$ for the final time, then the princess is on this
day at a vertex of the same colour as $j$, whose nearest vertex in $P$ is
 $j+1$ or higher. If when the prince visits vertex
$\ell-1$ the princess still eludes him, then he knows that on this
day the princess is at a vertex of the opposite colour to vertex $\ell-1$. The prince may therefore be certain of finding the princess if he
now performs a linear search of the opposite parity to the first: for instance, he can simply reverse his walk, beginning at vertex $\ell-1$ on the following day,
and ending at vertex~$1$.
\end{proof}

\begin{definition}
A strategy for the prince consisting of a linear search followed immediately by another linear search of the opposite parity will be called a \emph{linear strategy}.
\end{definition}

Let us say that a graph is \emph{solvable} if the prince has a strategy
that guarantees to find the princess.
The existence of a linear strategy
shows that any solvable palace can be solved
by a prince constrained so that the vertices he visits on consecutive days
are either adjacent or the same. Furthermore, he needs to be permitted to remain at the same vertex only once.
(It is obviously necessary in any solution
 that the prince should change his black--white parity at least once, since otherwise he can never find a princess who
starts with the opposite parity to himself.)

In the problem as posed, the prince is free to visit any vertex on any day, and it seems somewhat remarkable that this extra freedom is never required.
Equally surprising is that a linear strategy is optimal,
in that it uses the minimum number of days required to be certain of finding the princess.
The proof of this fact occupies the remainder of this paper.

\begin{definition}{\ }
\begin{defnlist}
\item We shall say that a leaf vertex in the graph of a palace is \emph{removable}
if it is adjacent to a vertex of degree at least $3$.
\item We say that a palace is \emph{reduced} is its graph
has no removable leaves.
\item If $G$ is a graph, we define
its \emph{reduction} $G^-$ to be the
graph obtained by repeatedly removing removable
leaves from the graph of $G$, one at a time, until no removable leaves remain. (It is clear that $G^-$ is well-defined up to graph isomorphism.)
\end{defnlist}
\end{definition}

For example, the reduction of the palace shown
in Figure~2 is obtained by deleting the leaf attached to
each of vertices $2$ and $5$, and any three of the four leaves attached to vertex $6$.

The next lemma shows that $G$ is solvable if and
only if $G^-$ is solvable, and that when either is solvable,
the optimal solutions to $G$ and~$G^-$
require the same number of days.

\begin{lemma}\label{lemma:reduction}
Let $G$ be the graph of a palace and let $H$ be obtained by removing
a removable leaf from $G$. Then
$G$ can be solved in $d$ days if and only if $H$ can be
solved in $d$ days.
\end{lemma}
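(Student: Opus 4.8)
The plan is to shuttle winning prince-strategies and evading princess-walks back and forth between $G$ and $H$, using that the deleted vertex $v$ is a leaf: the princess can occupy $v$ on a given day only when on the adjacent days she is sitting at its unique neighbour, which I will call $u$. Throughout I would record a prince strategy of length $d$ as a sequence $(x_1,\dots,x_d)$ of vertices and a princess walk of length $d$ as a sequence $(p_1,\dots,p_d)$ with consecutive terms adjacent, saying that the strategy \emph{catches} the walk if $x_i=p_i$ for some $i$; to solve a palace in $d$ days is to exhibit such an $(x_i)$ in its vertex set catching every such $(p_i)$. Since $v$ is removable, $u$ has degree at least $3$ in $G$ and hence at least $2$ in $H$; also $G$ then has at least four vertices, so neither $G$ nor $H$ is a single vertex, and the case $d=1$ (in which only a one-vertex palace is solvable) may be set aside as vacuous.

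For the implication ``$H$ solvable in $d$ days $\Rightarrow$ $G$ solvable in $d$ days'' I would start from a strategy $(x_i)$ solving $H$, note that its vertices already lie in $V(G)$, and argue it solves $G$ too. If some walk $(p_i)$ in $G$ evaded it, then either $(p_i)$ never visits $v$, in which case it is a walk in $H$ and we have an immediate contradiction, or the princess sits at $v$ on certain days. Those days are pairwise non-adjacent, and on each of them her neighbouring positions in the walk are forced to be $u$. The decisive step is to reroute: on each such day I replace her position $v$ by a neighbour of $u$ lying in $H$ and different from the prince's vertex that day, which is possible exactly because $u$ has at least two neighbours in $H$. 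Checking that the altered sequence is still a legal walk (the replacements are local and cannot collide, and the endpoints cause no trouble) and still evades $(x_i)$ yields a walk in $H$ avoiding $(x_i)$, a contradiction. I expect this rerouting, and pinning down precisely where ``removable'' is used, to be the only point requiring genuine (if routine) care; indeed one can check the statement is false for a non-removable leaf, the path $P_4$ against $P_3$ being a counterexample.

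For the converse, ``$G$ solvable in $d$ days $\Rightarrow$ $H$ solvable in $d$ days'', I would take a strategy $(x_i)$ solving $G$ and define $(x_i')$ by replacing each visit to $v$ by a visit to $u$, so that $(x_i')$ is a legal strategy for $H$. If some walk $(p_i)$ in $H$ evaded $(x_i')$, I would observe that $(p_i)$ is also a walk in $G$ and that it in fact evades the original strategy $(x_i)$: at each day with $x_i\neq v$ we have $x_i'=x_i\neq p_i$, and at each day with $x_i=v$ we have $p_i\neq v=x_i$ because $p_i$ is a vertex of $H$. This contradicts the choice of $(x_i)$, so $(x_i')$ solves $H$. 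No appeal to linear strategies or to optimality is needed; the whole argument turns on careful bookkeeping of which vertices are legal in which palace, together with the single use of the degree hypothesis on $u$.
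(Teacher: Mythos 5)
Your proposal is correct and follows essentially the same route as the paper: for the main direction you take a winning strategy on $H$, suppose a princess walk in $G$ evades it, and reroute each of her (necessarily isolated) visits to the leaf $v$ through a second neighbour of $u$ in $H$ avoiding the prince's vertex that day, which is exactly where removability ($u$ having at least two neighbours in $H$) enters, just as in the paper's proof. Your explicit treatment of the converse (replacing the prince's visits to $v$ by $u$) merely fills in a step the paper treats as immediate, so no further comment is needed.
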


\begin{proof}
Fix a numbering of the vertices of $G$  and assign to each
vertex in $H$ the corresponding number in $G$.
Let vertex $z$ be the leaf removed from $G$ to make $H$.
It suffices to show that if the prince has a winning strategy
in~$H$, in which he visits vertex $a_i$ on day $i$, for $1 \le i \le n$, then
the sequence of visits $(a_1,\ldots,a_n)$ also wins for the prince in~$G$.
If not, then there is a day~$d$ and a walk
\[ (w_1, \ldots, w_{d-2}, w_{d-1}, z, w_{d+1}, w_{d+2}, \ldots, w_n) \]
for the princess in $G$
such that $w_r \not= a_r$ for all $r \not= d$.
Since $z$ is a leaf in $G$, there is a vertex $y$ such that
$w_{d-1} = w_{d+1} = y$. Since the leaf $z$ is removable,
vertex $y$ has at least two neighbours
in $H$. Hence
there exists $x \not= z$ such that $x$ is adjacent to $y$ in $H$
and $x\not= a_{d-1}$. Now
\[ (w_1, \ldots, w_{d-2}, y, x, y, w_{d+2}, \ldots, w_n) \]
is a walk for the princess that defeats the prince's strategy on $H$,
which contradicts the hypothesis that his strategy is winning on $H$.
\end{proof}

Our second main result is as follows.
\begin{theorem}\label{thm:secondmain}
Let $G$ be the graph of a solvable palace and suppose that
the reduction $G^-$ of ~$G$ has exactly
$m$ vertices, where $m \ge 3$.
Then $G$ can be solved in $2m-4$ days by a linear strategy,
and $G$ cannot be solved in fewer days by any strategy.
\end{theorem}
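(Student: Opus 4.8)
The plan is to prove the two assertions separately, in each case reducing to the situation where $G$ is itself reduced, so that $G=G^-$ and $m=|V(G)|$. For the lower bound this reduction is legitimate by repeated use of Lemma~\ref{lemma:reduction}; for the upper bound I would instead check directly that passing from $G$ to $G^-$ leaves the length of a shortest linear search unchanged.

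The first substantive step is to describe the reduced solvable graphs. Let $P=v_0v_1\cdots v_\ell$ be a path of maximal length. Maximality forces $v_0$ and $v_\ell$ to be leaves, and it forces every subtree hanging from an interior vertex $v_i$ to have depth at most $2$ from $v_i$: a hanging branch of depth $3$ would, together with the two directions along $P$, contain a copy of $T$ when $3\le i\le\ell-3$, and would contradict the maximality of $P$ otherwise. The absence of removable leaves then forces each such hanging subtree to be a disjoint union of paths of length exactly $2$, with none attached at $v_1$ or $v_{\ell-1}$ (else $v_0$ or $v_\ell$ would be a removable leaf). Thus a reduced solvable graph with at least three vertices is a path $v_0\cdots v_\ell$ carrying $p$ pendant paths of length $2$ at interior vertices, with $m=\ell+1+2p$. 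The subtree $G'$ spanned by the non-leaf vertices of $G$ is untouched by reduction, since deleting a removable leaf cannot turn a non-leaf vertex into a leaf; and a shortest linear search is exactly a depth-first tour of $G'$ from $v_1$ to $v_{\ell-1}$, of length $2(|V(G')|-1)-(\ell-2)=m-3$ edges, that is, $m-2$ days. A linear strategy therefore occupies $2(m-2)=2m-4$ days, and it is a winning strategy by the analysis in the proof of Theorem~1; this establishes the upper bound. (For $m=3$ one has that $G^-$ is a path on three vertices, and a linear search followed by its reverse takes two days, again equal to $2m-4$.)

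For the lower bound I would work with the sets still available to the princess. Fix any strategy $(a_1,a_2,\dots)$ for the prince and, for $b\in\{0,1\}$, let $S^{(b)}_t$ be the set of vertices at which a princess who began on a vertex of colour $b$ could be on day $t$ without having been caught, so $S^{(b)}_1$ is the set of colour-$b$ vertices other than $a_1$, $S^{(b)}_t=N(S^{(b)}_{t-1})\setminus\{a_t\}$ for $t\ge2$ (where $N(X)$ is the set of vertices adjacent to some vertex of $X$), and the prince has won by day $d$ exactly when $S^{(0)}_d=S^{(1)}_d=\varnothing$. Since $a_t$ has a single colour, whereas $S^{(0)}_t$ and $S^{(1)}_t$ lie in opposite colour classes, each day ``works on'' precisely one of the two colours, acting on the set of the other colour only by $X\mapsto N(X)$. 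The heart of the proof is the claim: if $S^{(b)}_{t_b}=\varnothing$ then at least $m-2$ of the days $1,\dots,t_b$ work on colour $b$. Granting this for both colours, let $t_0\le t_1$ be the first days the two colours are cleared; then the prince has not won before day $t_1$, so any winning strategy lasts at least $t_1$ days, and among the days $1,\dots,t_1$ --- each working on exactly one colour --- at least $m-2$ work on colour $0$ (all of them by day $t_0$) and at least $m-2$ on colour $1$, whence $t_1\ge2m-4$.

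The claim is the main obstacle, and this is where the caterpillar structure is used. Restricting attention to the days that work on colour $b$, one checks that $S^{(b)}$ is always an interval along the backbone $v_0\cdots v_\ell$ --- in whichever colour class the day requires --- decorated with a ``flag'' at each backbone vertex whose attached pendant has not yet been swept out. On such a day the prince can pull in only one end of this interval, by one vertex, and only while the interval touches an end of the backbone does it actually shrink rather than bulge straight back out; moreover clearing a flagged pendant that the interval has reached costs two further colour-$b$ days, just as a pendant adds two days to a linear search. Making this precise --- for instance by a potential recording how much of the backbone the interval still has to be driven across, normalised to absorb the bounded advantage available at the two ends, together with twice the number of pendants still in its path --- would show that driving the full colour-$b$ vertex set, which spans the whole backbone and meets every pendant, down to $\varnothing$ costs at least $\ell-1+2p=m-2$ colour-$b$ days. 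I expect the careful bookkeeping here --- in particular controlling the days that work on the other colour and the behaviour near the two ends of the backbone --- to be the most delicate part of the argument.
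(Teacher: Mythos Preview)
Your structural description of reduced solvable graphs and the upper-bound computation are correct and match the paper.

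Your lower-bound approach, however, is both different from the paper's and incomplete. You track the set $S^{(b)}_t$ of surviving princess positions and aim to show that emptying it costs at least $m-2$ days of the relevant parity, via an ``interval-plus-flags'' picture along the backbone and an unspecified potential function. As you yourself note, the bookkeeping is not carried out, and the interval claim is more fragile than you suggest: a visit to an interior backbone vertex punches a hole in $S^{(b)}$, and while on a bare path one application of $N$ refills such a hole, the interaction with pendants and with intervening days of the other parity must be checked carefully before any potential can be shown to be monotone. The approach may be salvageable, but it is not the routine verification you imply.

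The paper bypasses all of this with a purely local counting argument. For each vertex $v$ that is neither a leaf nor adjacent to one, the radius-$2$ ball about $v$ in the reduced graph is a star with $d(v)$ arms of length~$2$; Lemma~\ref{lemma:bound} then forces at least $2d(v)-2$ visits to $v$ itself, and at least two visits to each arm, in any winning strategy. Summing these local constraints and performing a short edge count yields the lower bound $2m-4$ directly, and a linear strategy is easily seen to meet every local constraint with equality. No tracking of possible princess positions, no potential function, and no delicate endpoint analysis are required; the parity reasoning is confined entirely to the proof of Lemma~\ref{lemma:bound}.
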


For example, a palace whose graph is a path is certainly reduced.
Theorem~\ref{thm:secondmain} therefore implies that $30$ days are the minimum that will suffice
for the $17$-room palace in Blatter's version of the problem~\cite{BlatterPrincess}.

The hypothesis $m \ge 3$ rules out only the trivial cases where the original graph
$G$ has either one or two vertices; in these cases $G$
may be solved in one or two days respectively, and it is clear that no fewer days suffice.

We first establish that Theorem~\ref{thm:secondmain} holds for a particular class of reduced palaces.

\begin{lemma}\label{lemma:bound}
Let $k \ge 2$
and let $S$ be the star graph consisting of a central vertex $x$ together with $k$ disjoint
 paths
of length $2$ beginning at $x$. Let $a_1, \ldots, a_k$ be the vertices
adjacent to $x$ and let $b_1, \ldots, b_k$ be the vertices (other than $x$) adjacent
to $a_1,\ldots, a_k$, respectively. In any winning strategy for the prince,
the prince must visit vertex $x$ on $2k-2$ days. For each $i$,
there are two days on which he visits a vertex
in the set $B_i=\{a_i,b_i\}$.
\end{lemma}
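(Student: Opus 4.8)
The plan is to prove the two assertions of the lemma separately: the statement about $B_i$ is short, and the statement about $x$ rests on a parity decomposition of the princess's possible strategies together with an explicit evasion argument in the spirit of the proof of Theorem~1.

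First I would dispose of the statement about $B_i$. Fix $i$ and consider the two \emph{leg-$i$ shuttle strategies}, in each of which the princess moves back and forth between $a_i$ and $b_i$ for ever: in one she occupies $a_i$ on even days and $b_i$ on odd days, in the other the parities are exchanged. No visit by the prince to a vertex outside $B_i$ catches either of these princesses, and a visit to a vertex of $B_i$ on any single day catches at most one of the two. So if the prince visits $B_i$ on at most one day, then one of these princesses evades him; hence in a winning strategy he visits $B_i$ on at least two days.

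For the statement about $x$, colour $x$ and each $b_i$ white and each $a_i$ black. Since $S$ is bipartite, the colour of the vertex the princess occupies on even-numbered days is the same for every even day, and so is an invariant of her walk; call her \emph{even-black} if this colour is black (so she sits at some $a_i$ on every even day and at $x$ or some $b_j$ on every odd day) and \emph{even-white} otherwise. A winning strategy must defeat every princess of each type. I claim that to defeat all even-black princesses the prince must visit $x$ on at least $k-1$ \emph{odd}-numbered days. Granting this, the same statement with the two parities of day interchanged — which interchanges the two types of princess — shows he must also visit $x$ on at least $k-1$ even days, giving a total of at least $2k-2$ visits to $x$; combined with the previous paragraph this yields at least $2k-2+2k=4k-2$ days in any winning strategy.

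To prove the claim, suppose the prince visits $x$ on at most $k-2$ odd days, and write $v_d$ for the room he visits on day $d$. Call a day $d$ \emph{bad for leg $j$} if it would catch an even-black princess shuttling in leg $j$, that is, if $d$ is even and $v_d=a_j$, or $d$ is odd and $v_d=b_j$; each day is bad for at most one leg, so on every even day at least $k-1\ge 1$ legs are \emph{good}. I would then exhibit an evading even-black princess: on each even day she sits at $a_\ell$ for some leg $\ell$ good that day; on an odd day the prince does not visit $x$ she may route through $x$ and so pick a fresh leg for the next even day; on an odd day the prince does visit $x$ she is forced to remain in her current leg. The at most $k-2$ odd days on which the prince visits $x$ fall into maximal runs, and a run of $s$ of them pins the princess to a single leg on $s+1\le k-1$ consecutive even days; since each of those days is bad for at most one leg, at least $k-(s+1)\ge 1$ legs are good throughout the run, so she chooses one, enters it on the free odd day just before the run, and leaves on the free odd day just after. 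Outside the runs she moves freely, choosing a good leg afresh on each even day. It remains to check that this prescription — with small adjustments at the two ends of the time interval — is a legal walk avoiding every room the prince visits; this bookkeeping over the runs of blocked odd days is the only point requiring care, and everything else is routine.
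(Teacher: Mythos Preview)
Your argument is correct and shares the paper's overall architecture: split the princess's walks by parity, exhibit for each parity class an evader that forces at least $k-1$ visits to $x$ on days of the appropriate parity, and handle the branches separately by the obvious shuttle argument. Where you differ is in the evader itself. The paper equips the princess with a single greedy rule --- whenever she is at the centre she heads into whichever branch the prince is due to visit last in the remainder of his schedule --- and then argues that between her final visit to $x$ and her capture the prince must have occupied $x$ on every intervening day of the relevant parity while also touring the other $k-1$ branches. Your construction instead partitions the blocked odd days into maximal runs and, for each run of length $s\le k-2$, uses a pigeonhole count on the at most $s+1\le k-1$ ``bad'' legs to locate one that stays safe throughout. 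This trades the paper's one-line rule for slightly more explicit bookkeeping, but it is arguably more transparent about why the bound $k-1$ arises, and the end-of-interval adjustments you defer are indeed routine.
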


\begin{proof}
Take a black-and-white colouring of $S$, with the central vertex and the vertices $b_i$ coloured white, and the
vertices $a_i$ coloured black.
We may suppose that the princess is constrained to visit white vertices on even days.
Her strategy is a variation on the strategy used in the palace $T$  in Figure~$1$. If on day $d$
the princess is at
\begin{itemize}
\item[(1)] vertex $b_i$ then she (necessarily) goes to $a_i$ on day $d+1$;
\item[(2)] vertex $a_i$ then she goes to the central vertex unless the prince is due to visit
the central vertex
on day $d+1$,
in which case she goes to $b_i$;
\item[(3)] the central vertex, then she goes to $a_i$, where $B_i$ is
whichever branch is the last that the prince is due to visit in the days
after $d$.

\end{itemize}
It is clear that if the prince is to find a princess who is following this strategy, then he must do so in one of the
vertices $a_i$, on a day when she has just proceeded there from $b_i$. Hence, by the rules of movement of the princess, he must have visited all of the other branches $B_j$
for $j\neq i$ in the time since the princess last visited the central vertex. But the princess will always visit the central vertex on an even day, unless the prince is due to visit it that day. It follows that the prince must make at least $k-1$ visits to the central vertex on consecutive even days. The situation in which the princess is constrained to visit white vertices on odd days is similar, and so the prince must visit the centre at least $2k-2$ times overall to defeat a princess with no parity constraint.

Finally it is obvious that the prince must visit each branch $B_i$ on at least two days, for otherwise
the princess may avoid him by alternating between the two vertices in a single
branch.
\end{proof}

We are now ready to prove Theorem~\ref{thm:secondmain}.
\begin{proof}
By Lemma \ref{lemma:reduction} we may assume that $G$ is reduced, and so $G$ itself has~$m$ vertices. By Theorem~1, $G$ is a tree. If $m\in\{3,4\}$ then $G$ is a path, and it is easy to check that a linear strategy gives a solution of length $2m-4$, which is clearly optimal in either case. So we assume that $m>4$.

Write $B$ for the set of leaf vertices, $A$ for the set of vertices which are adjacent to leaves, and~$Q$ for the set of vertices which are in neither $A$ nor~$B$.
(We shall see shortly that~$Q$ is the vertex set of a path in~$G$, but this is not required
until the final stage of the proof.)

Since $G$ is reduced, each vertex in $A$ has degree exactly $2$. It is clear that exactly one of its neighbours is in $B$; we thus have $|A|=|B|$, and since $A$,~$B$ and~$Q$ are pairwise disjoint, it follows that $2|A|+|Q|=m$. Two vertices of~$A$ cannot be adjacent since $m>4$, and so each must have a neighbour in~$Q$.

If $v\in Q$ then the ball of radius $2$ about $v$ is a star graph of the type described in Lemma \ref{lemma:bound}. It follows from the lemma that each such vertex must be visited at least
$2d(v)-2$ times in any winning strategy, where $d(v)$ is the degree of the vertex $v$. On the other hand, if $a\in A$ then $a$ has a neighbour $b\in B$ and another neighbour $v$ in $Q$. Now $\{a,b\}$ forms a branch of the star graph centred on $v$; Lemma \ref{lemma:bound} tells us that any winning strategy must include two visits to $\{a,b\}$. We therefore have a lower bound $L$ on the length of a winning strategy, given by
\[ L=2|A| + \sum_{v\in Q} (2d(v)-2). \]

Consider the edges of $G$. Since it is a tree, there are $m-1$ of them. Of these, there are $|A|$
between $A$ and $B$, and a further $|A|$
between $Q$ and $A$;
the rest connect elements of $Q$. It follows that the sum of vertex degrees of elements of $Q$ is
\[
\sum_{v\in Q} d(v) = 2(m-1) - 3|A|.
\]
We therefore have
\[
L=4(m-1) - 4|A| -2|Q| = 4(m-1) - 2m = 2m-4,
\]
as required.

It remains to be shown that a linear strategy succeeds on day $2m-4$ or earlier.
Let $P$ be a path of maximal length $\ell$ in $G$. Then it follows from Theorem 1
that the $\ell-4$ vertices of $P$ which are neither
leaves nor adjacent to leaves, are precisely the vertices of $Q$. In following a linear strategy, the prince
starts at an element of $A$, visits every element of $A$ exactly twice, never visits $B$, and visits each vertex $v\in Q$ precisely
$2(d(v)-1)$ times. It follows that this strategy achieves the lower bound $L$, as claimed.
\end{proof}

We end by stressing the fact that if the prince follows a linear strategy
then he never visits a leaf vertex.
We leave it as an exercise for the
reader to show that this property is
shared by all optimal strategies.
The proof of Theorem~\ref{thm:secondmain} now shows that
the multiset of vertices visited by the prince in an optimal
solution of a graph $G$ is uniquely determined by $G$. This is
consistent with a stronger conjecture: that the only optimal strategies are linear strategies.

\def\cprime{$'$} \def\Dbar{\leavevmode\lower.6ex\hbox to 0pt{\hskip-.23ex
 \accent"16\hss}D}

\end{document}